\providecommand\reserveinserts[1]{}
\documentclass[AMS,Times1COL]{WileyNJDv5}
\makeatletter
\def\oddhead@titlepage@info{}
\def\evenhead@titlepage@info{}
\def\oddfoot@titlepage@info{}
\def\evenfoot@titlepage@info{}
\makeatother
\hypersetup{
  pdftitle={Multiplier obstructions for Legendre pairs of length 333},
  pdfauthor={Arthur F. Ramos, David B. Hulak, Ruy J. G. B. de Queiroz}
}

\articletype{Research Article}

\journal{Journal of Combinatorial Designs}
\volume{00}
\copyyear{2026}
\startpage{1}

\begin{document}

\title{Multiplier obstructions for Legendre pairs of length 333}

\author[1]{Arthur F. Ramos}
\author[2]{David B. Hulak}
\author[3]{Ruy J. G. B. de Queiroz}

\authormark{RAMOS \textsc{et al.}}
\titlemark{MULTIPLIER OBSTRUCTIONS FOR LEGENDRE PAIRS}

\address[1]{\orgname{Microsoft}, \country{USA}}
\address[2]{\orgname{Independent Researcher}}
\address[3]{\orgdiv{Centro de Inform\'atica}, \orgname{Universidade Federal de
Pernambuco}, \country{Brazil}}

\corres{Arthur F. Ramos, Microsoft, USA. \email{arfreita@microsoft.com}}

\abstract[Abstract]{%
A Legendre pair of length \(333\) would yield a Hadamard matrix of order
\(668\), the smallest order presently unresolved by the Hadamard conjecture.
We study the structured case in which both sequences are fixed by a common
subgroup \(H\leq (\mathbb Z/333\mathbb Z)^\times\) acting by coordinate
multiplication. We prove that such a pair can exist only when \(|H|\leq 6\).
After a mod-\(3\) compression reduces the problem to an order-\(108\) kernel,
there are exactly \(30\) subgroups. We exclude \(21\) of them,
including all \(19\) subgroups of order at least \(9\). The final order-\(9\)
subgroup is eliminated analytically: its orbit structure restricts the
\(9\)-compressed entries to
\(\{\pm1,\pm17,\pm19,\pm35,\pm37\}\); the Legendre equations force a
\(+17,-17\) pair in one compressed sequence, and a single-shift
autocorrelation bound then contradicts the required compressed correlation.
The remaining exclusions use full-image compression, a row-sum congruence,
exact meet-in-the-middle enumeration, and proof-carrying pseudo-Boolean
encodings. The solver-assisted cases are accompanied by independently checked
DRAT proofs or direct arithmetic certificates. The result constrains fixed
common-multiplier symmetry only; the unrestricted existence problems remain
open.}

\keywords{Hadamard matrix, Legendre pair, supplementary difference set,
multiplier, periodic autocorrelation, proof certificate}

\maketitle

% Wiley's class reserves \doi for production metadata, which suppresses DOI
% text emitted by the bibliography style. Restore a printable linked form.
\renewcommand{\doi}[1]{\href{https://doi.org/#1}{\nolinkurl{doi:#1}}}

\renewcommand\thefootnote{}
\footnotetext{\textbf{2020 Mathematics Subject Classification:}
05B20 (primary); 05B10, 11B83 (secondary).}
\footnotetext{\textbf{ORCID:} Arthur F. Ramos
\url{https://orcid.org/0009-0003-3568-0325}; David B. Hulak
\url{https://orcid.org/0009-0002-8056-1774}; Ruy J. G. B. de Queiroz
\url{https://orcid.org/0000-0003-1482-0977}.}
\renewcommand\thefootnote{\fnsymbol{footnote}}
\setcounter{footnote}{1}

\section{Introduction}\label{sec:introduction}

A Hadamard matrix of order \(n\) is a matrix \(M\in\{\pm1\}^{n\times n}\)
satisfying \(MM^{\mathsf T}=nI_n\) \cite{Hadamard1893}. Hadamard's conjecture,
often associated with Paley's foundational constructions \cite{Paley1933},
predicts that such a matrix exists whenever \(4\mid n\). All admissible orders
below \(668\) are known, while order \(668\) remains unresolved
\cite{KharaghaniTayfehRezaie2005,CatiPasechnik2024}. A recent construction of
a \(64\)-modular Hadamard matrix at this order provides modular, but not
integer, orthogonality \cite{Eliahou2025}.

Legendre pairs provide one of the most direct structured routes to this open
order. Fletcher, Gysin, and Seberry showed that a Legendre pair of odd length
\(L\) yields a Hadamard matrix of order \(2L+2\)
\cite{FletcherGysinSeberry2001}. Consequently, length
\[
L=333=3^2\cdot37
\]
is the relevant instance for order \(668\). Compression and power spectral
density have been central to constructive searches for Legendre pairs
\cite{DjokovicKotsireas2015,KotsireasKoutschan2021,
KotsireasEtAl2023,KotsireasGomezGomezPerez2025,
KotsireasKoutschanWinterhof2025}. Constructive milestones include a Legendre
pair of length \(77\) and new pairs obtained through mod-\(3\) and mod-\(5\)
compression \cite{TurnerEtAl2021,KotsireasKoutschan2021,
KotsireasEtAl2023}. Multiplier and decimation actions provide a second
standard reduction: they replace \(333\) independent coordinates by signs on
multiplication orbits
\cite{KoukouvinosEtAl1997,GeorgiouKoukouvinos2002,TurnerEtAl2022}.

This paper determines how far that reduction can go at length \(333\). We use
``multiplier'' in the fixed sense: a subgroup
\(H\leq(\mathbb Z/333\mathbb Z)^\times\) fixes a sequence \(a\) when
\[
a_{hi}=a_i\qquad(h\in H,\ i\in\mathbb Z_{333}).
\]
The same subgroup is required to fix both members of the pair. This is
strictly narrower than a multiplier-with-translation relation. Our main result
is the following.

\begin{theorem}\label{thm:main}
Let \(H\leq(\mathbb Z/333\mathbb Z)^\times\). If there exists a Legendre pair
of length \(333\) whose two sequences are fixed by \(H\), then
\[
H\leq\ker\left((\mathbb Z/333\mathbb Z)^\times
\longrightarrow(\mathbb Z/3\mathbb Z)^\times\right)
\qquad\text{and}\qquad |H|\leq 6.
\]
More precisely, after the necessary mod-\(3\) reduction there are \(30\)
subgroups; \(21\) are impossible, including all \(19\) subgroups of order at
least \(9\). The nine subgroups not decided here have order at most
\(6\).
\end{theorem}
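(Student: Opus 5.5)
The proof runs in four stages: a mod-\(3\) reduction, a classification of the remaining subgroups, a battery of exclusion tests, and an analytic treatment of the one order-\(9\) subgroup that resists them.

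\emph{Mod-\(3\) reduction.} Write \(G=(\mathbb Z/333\mathbb Z)^\times\cong(\mathbb Z/9)^\times\times(\mathbb Z/37)^\times\cong C_6\times C_{36}\), of order \(216\). We use the standard Legendre pair conditions: both sequences are \(\pm1\) with row sums \(\pm1\) (normalised to \(1\)), and \(P_A(s)+P_B(s)=-2\) for \(s\neq0\). Suppose \(H\) contains an element \(h\) with \(h\equiv 2\pmod 3\). We compress modulo \(3\): set \(a^{(3)}_r=\sum_{i\equiv r\,(3)}a_i\). This compressed sequence is fixed by \(h\) acting on \(\mathbb Z_3\), so \(a^{(3)}_1=a^{(3)}_2\), and likewise for \(b\). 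A length-\(3\) sequence \((x,y,y)\) has periodic autocorrelation \(P(1)=2xy+y^2\), and the compressed Legendre equations require \(P_{a^{(3)}}(1)+P_{b^{(3)}}(1)\) to equal the value obtained by summing the full equations over residue classes. Combining this with the row-sum relations \(x+2y=1\), we expect an integrality or parity contradiction; if it does not close directly, we pass to the mod-\(9\) compression with the induced action instead. This shows \(H\le\ker(G\to(\mathbb Z/3)^\times)\), and that kernel has order \(108\), with structure \(C_3\times C_{36}\).

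\emph{Classifying and testing subgroups.} We enumerate the subgroups of \(C_3\times C_{36}\cong C_3\times C_9\times C_4\): the \(2\)-part contributes \(3\) choices and the \(3\)-part \(C_3\times C_9\) contributes \(10\) subgroups, giving \(30\) in all. Every subgroup of order at least \(9\) contains a subgroup of order exactly \(9\) or \(12\), or one of the minimal large subgroups, and any pair fixed by \(H\) is also fixed by each \(K\le H\). So it suffices to exclude the minimal members among subgroups of order at least \(9\), together with the two extra small ones. For each candidate we compute the \(H\)-orbits on \(\mathbb Z_{333}\) and apply the following tests in order of cost.
\begin{itemize}
\item \textbf{Full-image compression.} Compress onto \(\mathbb Z_{37}\) or \(\mathbb Z_9\), where \(H\) acts through its image. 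The compressed entries are then constrained sums of orbit sizes, and the compressed autocorrelation identities often fail immediately.
\item \textbf{Row-sum congruence.} Orbit sizes modulo small primes must be compatible with a row sum of \(1\).
\item \textbf{Exact search.} When these tests do not suffice, the orbit count is small enough for a meet-in-the-middle search over orbit sign vectors, matching \(P_A\) against \(-2-P_B\).
\item \textbf{Pseudo-Boolean encoding.} For the largest remaining cases we use a pseudo-Boolean encoding with DRAT certificates.
\end{itemize}

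\emph{The order-\(9\) holdout (main obstacle).} We expect the main difficulty to be the order-\(9\) subgroup that survives these tests. For it we work in the \(9\)-compression onto \(\mathbb Z_{37}\): each compressed entry is a signed sum over one residue class modulo \(37\), with orbit sizes in \(\{1,\dots,9\}\). Enumerating the orbit-size profiles gives the admissible value set \(\{\pm1,\pm17,\pm19,\pm35,\pm37\}\). Feeding this into \(\sum_r a_r^2+\sum_r b_r^2\) via the compressed \(P(0)\) identity, we expect the solutions to force one sequence to contain both \(+17\) and \(-17\). The argument then closes with a single-shift bound: choose the shift \(s\) carrying the \(+17\) entry onto the \(-17\) entry, bound \(P(s)\) from above, and show it cannot reach the required value \(-2\cdot 9-\dots\). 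If the bound is too weak for a single shift, we would average over the \(H\)-orbit of \(s\) instead.

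The nine subgroups left undecided all have order at most \(6\), which gives \(|H|\le6\).
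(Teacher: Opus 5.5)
Your overall architecture matches the paper's. Your monotonicity remark is also a correct improvement that the paper does not use: an $H$-invariant pair is $K$-invariant for every $K\le H$, and every subgroup of $U_1$ of order at least $9$ either contains one of order $9$ or has order $12$. So closing IDs $11$--$18$ already gives $|H|\le 6$ and makes the certificates for IDs $19$--$29$ redundant.

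The genuine gap is in the step you single out as the main obstacle: it is set up on the wrong compression. A residue class modulo $37$ has only $9$ elements, and they lie in $9$ different $H_{12}$-orbits. Its signed sum is therefore an unconstrained odd integer in $[-9,9]$, so the value set $\{\pm1,\pm17,\pm19,\pm35,\pm37\}$ cannot arise there. Your target $-2\cdot 9$ is likewise the one for that length-$37$ compression, not the one you need.

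The argument works in the length-$9$ compression $\widetilde x_j=\sum_t x_{j+9t}$, whose entries are sums over a residue class modulo $9$, i.e.\ a column $\{j\}\times\mathbb Z_{37}$ of $37$ elements. It needs a hypothesis you never state: $H_{12}=\langle10,46\rangle$ is trivial modulo $9$. Hence each column is $H$-stable and splits into one fixed point and four orbits of size exactly $9$. The compressed system is then $\operatorname{PAF}_{\widetilde a}(s)+\operatorname{PAF}_{\widetilde b}(s)=-74$ for $1\le s\le 8$, with total squared norm $594$. The norm gives $288x+360y=576$, hence exactly two entries of modulus $17$. It is the row-sum condition, not the norm, that places them in one sequence with opposite signs: one $\pm17$ among eight $\pm1$'s has absolute row sum at least $9$, and two of equal sign give at least $27$. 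The shift $s=q-p$ then gives $\operatorname{PAF}_{\widetilde a}(s)\le-289+34+6=-249$ and $\operatorname{PAF}_{\widetilde b}(s)\le 9$, so the sum is at most $-240<-74$. No averaging over shifts is needed.

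Two further steps are left as expectations rather than arguments. In the mod-$3$ step, normalize $x+2y=u+2v=1$ and use $\operatorname{PAF}_{\widetilde a}(1)+\operatorname{PAF}_{\widetilde b}(1)=-222$. Your equation then becomes $(3y-1)^2+(3v-1)^2=668$. The contradiction is that $668=4\cdot167$ with $167\equiv3\pmod 4$ is not a sum of two squares; this is exactly the paper's integral DFT value $c_0-c_1$ at frequency $111$. No fallback to a mod-$9$ compression is needed.

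The count of $21$ also requires the order-$4$ and order-$6$ subgroups IDs $6$ and $8$, for which you name no method. With $r=90$ and $r=63$ orbits your meet-in-the-middle search is out of reach, and a generic solver need not finish (CP-SAT did not on ID $6$). The paper closes both with the same value-set $9$-compression, since both are trivial modulo $9$, using $V_4$ and $V_6$ in place of $V_9$ and an exhaustive enumeration of compressed pairs. Without these you still obtain $|H|\le 6$, but not the ``more precisely'' part of the statement.
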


The principal new analytic ingredient closes the last order-\(9\) subgroup. Its
mod-\(37\) orbit structure forces a small, non-generic set of possible
column sums in the \(9\)-compression. The Legendre equations then force a
rigid pair of entries \(+17,-17\), whose displacement gives an immediate
autocorrelation contradiction. The same value-set compression also closes
one order-\(4\) and one order-\(6\) subgroup by exact finite enumeration.
The distinguishing point is that the compressed entries retain the exact
value set imposed by multiplier-orbit sizes; treating them as arbitrary odd
integers leaves the corresponding compressed system feasible.

The remaining high-order subgroups are eliminated by three complementary
mechanisms: integral Fourier values after mod-\(37\) compression, an exact
row-sum obstruction modulo \(24\), and orbit-level pseudo-Boolean systems.
For the last mechanism we provide proof-carrying evidence rather than relying
on solver status alone: five instances have independently checked DRAT traces,
and six have shorter direct pseudo-Boolean upper-bound certificates
\cite{WetzlerHeuleHunt2014}.

The theorem concerns fixed, untranslated common-multiplier symmetry. It leaves
the unrestricted length-\(333\) and order-\(668\) existence problems open.

The paper is organized as follows. Section~\ref{sec:preliminaries} recalls
Legendre pairs, Fourier duality, compression, and fixed multiplier symmetry.
Section~\ref{sec:reduction} gives the mod-\(3\) reduction and classifies its
subgroup lattice. Section~\ref{sec:value-compression} develops the value-set
\(9\)-compression and proves the analytic order-\(9\) obstruction.
Section~\ref{sec:certified} describes the exact orbit models and proof
certificates. Section~\ref{sec:classification} assembles the classification
and Section~\ref{sec:reproducibility} records the independently checkable
artifacts.

\section{Legendre pairs, compression, and fixed multipliers}
\label{sec:preliminaries}

All sequence indices are taken modulo the sequence length. For a real sequence
\(x=(x_0,\ldots,x_{L-1})\), its periodic autocorrelation is
\[
\operatorname{PAF}_x(s)=\sum_{j=0}^{L-1}x_jx_{j+s}.
\]
Let \(\zeta_L=\exp(2\pi i/L)\) and
\[
X(k)=\sum_{j=0}^{L-1}x_j\zeta_L^{jk}.
\]
The power spectral density is
\(\operatorname{PSD}_x(k)=|X(k)|^2\). The finite
Wiener--Khinchin identity is
\[
\operatorname{PSD}_x(k)
 =\sum_{s=0}^{L-1}\operatorname{PAF}_x(s)\zeta_L^{ks}.
\]

\begin{definition}\label{def:lp}
Two sequences \(a,b\in\{\pm1\}^L\), with \(L\) odd, form a
\emph{Legendre pair} if
\[
\operatorname{PAF}_a(s)+\operatorname{PAF}_b(s)=-2
\qquad(1\leq s<L).
\]
\end{definition}

The row-sum and Fourier formulations follow immediately.

\begin{lemma}\label{lem:lp-equivalences}
If \(a,b\) form a Legendre pair of odd length \(L\), then
\[
\sum_j a_j,\ \sum_j b_j\in\{\pm1\},
\]
and for every \(k\not\equiv0\pmod L\),
\[
\operatorname{PSD}_a(k)+\operatorname{PSD}_b(k)=2L+2.
\]
\end{lemma}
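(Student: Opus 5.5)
Both claims come from the same device: sum the defining relation \(\operatorname{PAF}_a(s)+\operatorname{PAF}_b(s)=-2\) against suitable weights over all shifts and use \(\operatorname{PAF}_x(0)=L\) for \(\pm1\) sequences. For the row sums we take all weights equal to \(1\); for the power spectral densities we take the weights \(\zeta_L^{ks}\) and apply the Wiener--Khinchin identity.

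\emph{Row sums.} For any real sequence, \(\sum_{s=0}^{L-1}\operatorname{PAF}_x(s)=\bigl(\sum_j x_j\bigr)^2\), since each product \(x_jx_{j+s}\) is counted exactly once as \((j,s)\) ranges over \(\mathbb Z_L^2\). This is also the \(k=0\) case of Wiener--Khinchin. Because \(a,b\in\{\pm1\}^L\), we have \(\operatorname{PAF}_a(0)=\operatorname{PAF}_b(0)=L\). Summing over all \(s\) then gives
\[
\Bigl(\sum_j a_j\Bigr)^2+\Bigl(\sum_j b_j\Bigr)^2=2L-2(L-1)=2.
\]
Since \(L\) is odd, each row sum \(\sum_j a_j\) and \(\sum_j b_j\) is an odd integer, so each square is at least \(1\). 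Two odd squares summing to \(2\) must both equal \(1\), so each sum is \(\pm1\).

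\emph{Spectral identity.} Fix \(k\not\equiv0\pmod L\). By Wiener--Khinchin,
\[
\operatorname{PSD}_a(k)+\operatorname{PSD}_b(k)=\sum_{s=0}^{L-1}\bigl(\operatorname{PAF}_a(s)+\operatorname{PAF}_b(s)\bigr)\zeta_L^{ks}=2L+\sum_{s=1}^{L-1}(-2)\zeta_L^{ks}.
\]
Because \(\zeta_L^k\neq1\), the full geometric sum \(\sum_{s=0}^{L-1}\zeta_L^{ks}\) vanishes, so \(\sum_{s=1}^{L-1}\zeta_L^{ks}=-1\). The right-hand side is therefore \(2L+2\).

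There is no real obstacle in this argument. The only step needing care is the parity argument for the row sums, which uses that \(L\) is odd so that a sum of \(L\) terms \(\pm1\) is odd.
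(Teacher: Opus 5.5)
Your proposal is correct and follows the paper's proof: sum the Legendre equations over all shifts, use \(\operatorname{PAF}_a(0)=\operatorname{PAF}_b(0)=L\), and use oddness of the row sums to force \(\pm1\). For the spectral identity you apply Wiener--Khinchin and the vanishing geometric sum, which spells out the step the paper summarizes in one line.
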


\begin{proof}
For every finite cyclic sequence,
\[
\sum_{s=0}^{L-1}\operatorname{PAF}_x(s)=\left(\sum_jx_j\right)^2.
\]
Summing the Legendre equations and using
\(\operatorname{PAF}_a(0)=\operatorname{PAF}_b(0)=L\) gives
\[
\left(\sum_ja_j\right)^2+\left(\sum_jb_j\right)^2=2.
\]
Both row sums are odd, so each is \(\pm1\). Fourier transformation of the
autocorrelation equations gives the stated PSD identity.
\end{proof}

After independently negating the sequences, their row sums may be normalized
to \(1\). Their negative supports then form supplementary difference sets with
parameters
\[
\operatorname{SDS}\left(L;\frac{L-1}{2},\frac{L-1}{2};
\frac{L-3}{2}\right).
\]
At \(L=333\) these are \(\operatorname{SDS}(333;166,166;165)\).
This connects the present problem directly to design-theoretic multiplier
methods \cite{GeorgiouKoukouvinos2002,KoukouvinosEtAl1997}.

\subsection{Compression}

Let \(L=dm\). The \(d\)-compression of \(x\) is the length-\(d\) integer
sequence
\[
\widetilde{x}_j=\sum_{t=0}^{m-1}x_{j+td},
\qquad 0\leq j<d.
\]
We use both the Fourier and autocorrelation forms of the standard compression
identity \cite{DjokovicKotsireas2015}.

\begin{lemma}[Compression identities]\label{lem:compression}
For \(0\leq k,s<d\),
\[
\widetilde{X}(k)=X(mk)
\]
and
\[
\operatorname{PAF}_{\widetilde{x}}(s)
 =\sum_{\substack{0\leq r<L\\r\equiv s\pmod d}}
  \operatorname{PAF}_x(r).
\]
\end{lemma}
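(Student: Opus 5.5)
Both identities follow from direct computation: substitute the definition of \(\widetilde{x}\) and reindex. Throughout we use \(\zeta_d=\zeta_L^{m}\), which holds because \(L=dm\).

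\emph{Fourier identity.} For \(0\le k<d\) we expand
\[
\widetilde X(k)=\sum_{j=0}^{d-1}\sum_{t=0}^{m-1}x_{j+td}\,\zeta_d^{jk}.
\]
The map \((j,t)\mapsto i=j+td\) is a bijection from \(\{0,\dots,d-1\}\times\{0,\dots,m-1\}\) onto \(\{0,\dots,L-1\}\), so we write every index uniquely in this form. Since \(\zeta_d^{jk}=\zeta_L^{mjk}\) and \(\zeta_L^{mtdk}=\zeta_L^{Ltk}=1\), we get \(\zeta_d^{jk}=\zeta_L^{m(j+td)k}=\zeta_L^{imk}\). Hence \(\widetilde X(k)=\sum_{i}x_i\zeta_L^{i\,mk}=X(mk)\). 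This step is purely formal.

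\emph{Autocorrelation identity.} Fix \(0\le s<d\) and expand
\[
\operatorname{PAF}_{\widetilde x}(s)=\sum_{j=0}^{d-1}\widetilde x_j\widetilde x_{j+s}
=\sum_{j=0}^{d-1}\sum_{t,u=0}^{m-1}x_{j+td}\,x_{j+s+ud}.
\]
This is the only step needing care. The index \(j+s\) of \(\widetilde x\) is read modulo \(d\). This is harmless, because \(\widetilde x_{j}\) depends only on \(j \bmod d\): the set \(\{j+ud \bmod L: 0\le u<m\}\) is exactly the residue class of \(j\) mod \(d\) inside \(\mathbb Z_L\).

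Now put \(i=j+td\), which runs over \(\mathbb Z_L\) once as \((j,t)\) varies. Put \(r=s+(u-t)d\) taken mod \(L\). For fixed \((j,t)\), as \(u\) ranges over \(0,\dots,m-1\), the value \(r\) ranges bijectively over the \(m\) residues \(r\in\mathbb Z_L\) with \(r\equiv s\pmod d\). Moreover \(j+s+ud=i+r\). The sum therefore becomes
\[
\sum_{\substack{r\equiv s\ (d)}}\ \sum_{i\in\mathbb Z_L}x_ix_{i+r}=\sum_{r\equiv s\ (d)}\operatorname{PAF}_x(r).
\]

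Alternatively, the second identity follows from the first via the Wiener--Khinchin identity. By Wiener--Khinchin and the Fourier identity,
\[
\operatorname{PSD}_{\widetilde x}(k)=|X(mk)|^2=\sum_r\operatorname{PAF}_x(r)\zeta_d^{rk}.
\]
Grouping \(r\) by residue mod \(d\) and inverting the length-\(d\) DFT gives the claim. I would use the direct computation as the proof and mention this as a consistency check.
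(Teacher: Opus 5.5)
Your proposal is correct and follows the paper's proof. You get the Fourier identity by writing each index uniquely as \(j+td\) and using \(\zeta_L^{m}=\zeta_d\), and the autocorrelation identity from the bijection between summation triples \((j,t,u)\) and pairs \((i,r)\) with \(i\in\mathbb Z_L\), \(r\equiv s\pmod d\); you also make explicit what the paper leaves implicit, namely the well-definedness of \(\widetilde x\) modulo \(d\) and the map \(r=s+(u-t)d\), and your Wiener--Khinchin route is a valid consistency check.
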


\begin{proof}
Writing every coordinate uniquely as \(j+td\) gives
\[
X(mk)=\sum_{j=0}^{d-1}\sum_{t=0}^{m-1}
x_{j+td}\zeta_L^{(j+td)mk}
=\sum_{j=0}^{d-1}\widetilde{x}_j\zeta_d^{jk}.
\]
For the autocorrelation identity, expand
\(\operatorname{PAF}_{\widetilde{x}}(s)\). The pairs of summation indices are
in bijection with pairs \((i,r)\) where \(i\in\mathbb Z_L\) and
\(r\equiv s\pmod d\), giving the stated sum.
\end{proof}

\subsection{Fixed common multipliers}

Write \(U_L=(\mathbb Z/L\mathbb Z)^\times\).

\begin{definition}\label{def:hinvariant}
For \(H\leq U_L\), a sequence \(x\) is \emph{\(H\)-invariant} if
\[
x_{hi}=x_i\qquad(h\in H,\ i\in\mathbb Z_L).
\]
A Legendre pair is \(H\)-invariant when both of its sequences are
\(H\)-invariant.
\end{definition}

Thus an \(H\)-invariant sequence is constant on the multiplication orbits of
\(H\) on \(\mathbb Z_L\). Its PAF is constant on the corresponding shift
orbits, and its DFT is constant on the frequency orbits. This differs from
the broader condition
\[
x_{ti}=x_{i+c},
\]
which allows a translation. No assertion in this paper covers that broader
notion.

Global negation preserves every PAF, so in an orbit model we may normalize
the signs on the singleton orbit \(\{0\}\) independently for the two
sequences. Decimation classes and related multiplier actions have been
studied in \cite{TurnerEtAl2022}; here we use the subgroup lattice itself as
the index set for exact nonexistence results.

\section{The mod-3 reduction and the subgroup lattice}
\label{sec:reduction}

We first record the elementary arithmetic obstruction used repeatedly below.

\begin{lemma}\label{lem:not-two-squares}
The integer \(668\) is not a sum of two integer squares.
\end{lemma}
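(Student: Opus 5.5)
We will prove the lemma with the sum-of-two-squares theorem, after first factoring the integer. We have
\[
668=4\cdot167=2^2\cdot167 .
\]
The factor \(167\) is prime: \(\sqrt{167}<13\), and \(167\) is not divisible by \(2,3,5,7,11\). Moreover \(167=4\cdot41+3\), so \(167\equiv3\pmod4\). By Fermat's two-squares theorem, a positive integer is a sum of two squares exactly when every prime \(\equiv3\pmod4\) divides it to an even power. Here \(167\) divides \(668\) exactly once, so \(668\) is not a sum of two squares.

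To keep the argument self-contained, we will also give the elementary descent, which avoids the full theorem. Suppose \(x^2+y^2=668\). Since \(668\equiv0\pmod4\) and squares are \(0\) or \(1\) modulo \(4\), both \(x\) and \(y\) are even. Dividing by \(4\) gives
\[
(x/2)^2+(y/2)^2=167.
\]
But \(167\equiv3\pmod4\), and a sum of two squares is congruent to \(0\), \(1\) or \(2\) modulo \(4\). This is a contradiction. This reduction mod \(4\) is the cleanest route, and it is the one we will write out.

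There is no real obstacle in this lemma; the only thing to check is the congruence \(167\equiv3\pmod4\). In a sanity check one could instead enumerate \(x\) from \(0\) to \(25\) and verify that \(668-x^2\) is never a perfect square, but the mod-\(4\) argument makes this unnecessary.

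The lemma will be used through Lemma~\ref{lem:lp-equivalences}. For an \(H\)-invariant pair, the Fourier values at suitable frequencies become rational integers, for example after a compression as in Lemma~\ref{lem:compression}. The identity \(\operatorname{PSD}_a(k)+\operatorname{PSD}_b(k)=2L+2=668\) then asks for \(668\) to be a sum of two integer squares, which this lemma rules out.
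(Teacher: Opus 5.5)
Your proposal is correct. Your first paragraph is exactly the paper's proof: factor \(668=2^2\cdot167\), check that \(167\) is a prime with \(167\equiv3\pmod4\), and invoke the sum-of-two-squares theorem.

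The mod-\(4\) descent you choose to write out is a genuinely more elementary route, and it is also correct. It needs neither the primality of \(167\) nor Fermat's theorem. It uses only two facts: \(668\equiv0\pmod4\) forces both squares to be even, and the cofactor \(167\) is itself \(\equiv3\pmod4\).

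The paper's route buys generality. Your descent works here only because the odd part of \(668\) happens to be \(\equiv3\pmod4\). For example, \(84=2^2\cdot21\) is not a sum of two squares, but after the descent \(21\equiv1\pmod4\), so a mod-\(4\) check no longer decides it, while the two-squares theorem still applies. For this specific lemma, your argument is fully sufficient and self-contained.
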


\begin{proof}
We have \(668=2^2\cdot167\), where \(167\) is prime and
\(167\equiv3\pmod4\). The sum-of-two-squares theorem therefore excludes a
representation \(668=u^2+v^2\).
\end{proof}

\subsection{The mod-3 obstruction}

\begin{proposition}\label{prop:mod3}
If an \(H\)-invariant Legendre pair of length \(333\) exists, then
\[
H\leq U_1:=
\ker\left(U_{333}\longrightarrow U_3\right)
=\{u\in U_{333}:u\equiv1\pmod3\}.
\]
\end{proposition}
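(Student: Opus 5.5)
The plan is to show that any \(u\in H\) with \(u\equiv2\pmod3\) forces a contradiction at the frequency \(k=111=L/3\). Since \(U_3=\{1,2\}\), excluding such \(u\) is exactly the claim \(H\leq U_1\). The contradiction will come from combining the Fourier form of the Legendre condition (Lemma~\ref{lem:lp-equivalences}), the \(3\)-compression identity (Lemma~\ref{lem:compression}), and Lemma~\ref{lem:not-two-squares}.

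First, I record how \(H\)-invariance acts on the DFT. Let \(a\) be \(H\)-invariant and \(h\in H\). Substituting \(j\mapsto h^{-1}j\), a bijection of \(\mathbb Z_{333}\) because \(h\) is a unit, and using \(a_{h^{-1}j}=a_j\), gives
\[
A(hk)=\sum_j a_j\zeta_{333}^{jhk}=\sum_j a_{h^{-1}j}\zeta_{333}^{jk}=A(k).
\]
So \(A\) is constant on \(H\)-orbits of frequencies.

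Now suppose \(u\in H\) with \(u\equiv2\pmod3\). Then \(111u\equiv111\cdot(u\bmod3)=222\pmod{333}\), so \(A(111)=A(222)\). Because \(a\) is real, \(A(222)=A(-111)=\overline{A(111)}\), and therefore \(A(111)\) is real.

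Next, by Lemma~\ref{lem:compression} with \(d=3\) and \(m=111\), we have \(A(111)=\widetilde A(1)=\widetilde a_0+\widetilde a_1\omega+\widetilde a_2\omega^2\), where \(\omega=\zeta_3\) and the \(\widetilde a_j\) are integers. This element of \(\mathbb Z[\omega]\) equals \((\widetilde a_0-\widetilde a_2)+(\widetilde a_1-\widetilde a_2)\omega\). Its imaginary part is \((\widetilde a_1-\widetilde a_2)\sqrt3/2\), so being real forces \(\widetilde a_1=\widetilde a_2\), and then \(A(111)=\widetilde a_0-\widetilde a_1\in\mathbb Z\). The same argument applies to \(b\), since \(H\) fixes both sequences, so \(B(111)\in\mathbb Z\) as well.

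Finally, Lemma~\ref{lem:lp-equivalences} at \(k=111\not\equiv0\) gives
\[
A(111)^2+B(111)^2=2\cdot333+2=668,
\]
which expresses \(668\) as a sum of two integer squares. This contradicts Lemma~\ref{lem:not-two-squares}, so no element of \(H\) is \(\equiv2\pmod3\).

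No step is a real obstacle. The only point needing care is the integrality step: one must use the compression identity to see that \(A(111)\) lies in \(\mathbb Z[\omega]\), and then that a real element of \(\mathbb Z[\omega]\) is a rational integer.
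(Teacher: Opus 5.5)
Your proof is correct and follows the same route as the paper: work at frequency \(111\), use the \(3\)-compression identity to show that \(A(111)\) and \(B(111)\) are integers, and contradict Lemma~\ref{lem:not-two-squares} via the PSD identity. The only difference is cosmetic. You obtain \(\widetilde a_1=\widetilde a_2\) on the Fourier side, from realness of \(A(111)\) via \(A(111u)=A(111)\) and conjugate symmetry, followed by integrality in \(\mathbb Z[\omega]\). The paper instead reads it off directly from the fact that \(u\equiv-1\pmod 3\) makes the \(3\)-compression invariant under negation on \(\mathbb Z_3\).
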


\begin{proof}
Suppose \(H\) contains \(u\equiv2\pmod3\). The \(3\)-compression of either
sequence is invariant under multiplication by \(-1\) on \(\mathbb Z_3\), so
it has the form \((c_0,c_1,c_1)\). At a primitive cube root of unity its DFT
is the integer \(c_0-c_1\). By Lemma~\ref{lem:compression}, these two
compressed DFT values are the original DFT values at frequency \(111\).
Lemma~\ref{lem:lp-equivalences} would therefore express \(668\) as a sum of
two integer squares, contradicting Lemma~\ref{lem:not-two-squares}.
\end{proof}

This is the length-\(333\) instance of the compression and spectral
obstructions developed in
\cite{DjokovicKotsireas2015,KotsireasKoutschan2021,
KotsireasEtAl2023,KotsireasGomezGomezPerez2025}.

\subsection{The 30 compatible subgroups}

The Chinese remainder theorem gives
\[
U_{333}\cong U_9\times U_{37}\cong C_6\times C_{36},
\]
and hence
\[
U_1\cong C_3\times C_{36}
\cong C_4\times C_3\times C_9.
\]
There are exactly \(30\) subgroups of \(U_1\). Indeed, the Sylow
decomposition separates the \(C_4\) factor, which has three subgroups, from
\(C_3\times C_9\). The latter has one trivial subgroup, four subgroups of
order \(3\), four subgroups of order \(9\), and the full subgroup of order
\(27\), hence ten subgroups in total.

We use the stable identifiers in Table~\ref{tab:subgroups}. The generators
are residues modulo \(333\). The table also records the number \(r\) of
multiplication orbits on \(\mathbb Z_{333}\) and the orders \(h_9,h_{37}\)
of the two CRT images. These data are regenerated by closure of cyclic
subgroups and checked against the complete lattice.

\begin{table}[t]
\centering
\caption{Status by subgroup order inside the mod-$3$ kernel.}
\label{tab:order-summary}
\begin{tabular}{rrrr}
\toprule
$|H|$ & subgroups & impossible & open \\
\midrule
1 & 1 & 0 & 1 \\
2 & 1 & 0 & 1 \\
3 & 4 & 0 & 4 \\
4 & 1 & 1 & 0 \\
6 & 4 & 1 & 3 \\
9 & 4 & 4 & 0 \\
12 & 4 & 4 & 0 \\
18 & 4 & 4 & 0 \\
27 & 1 & 1 & 0 \\
36 & 4 & 4 & 0 \\
54 & 1 & 1 & 0 \\
108 & 1 & 1 & 0 \\
\bottomrule
\end{tabular}
\end{table}

\subsection{A full-image mod-37 obstruction}

\begin{proposition}\label{prop:mod37}
Suppose that the image of \(H\leq U_{333}\) in \(U_{37}\) is all of
\(U_{37}\). Then no \(H\)-invariant Legendre pair of length \(333\) exists.
\end{proposition}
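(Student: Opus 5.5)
The plan is to mirror the proof of Proposition~\ref{prop:mod3}, now compressing modulo \(37\) instead of modulo \(3\). Write \(333=dm\) with \(d=37\) and \(m=9\), and let \(\widetilde a,\widetilde b\) be the \(37\)-compressions of an \(H\)-invariant Legendre pair \(a,b\). The first step is to show that each compressed sequence is invariant under the image \(\bar H\) of \(H\) in \(U_{37}\).

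For the invariance, take \(h\in H\). Since \(h\) is a unit modulo \(333\), multiplication by \(h\) permutes \(\mathbb Z_{333}\). It maps the residue class \(\{i:i\equiv j\pmod{37}\}\) bijectively onto the class \(\{i:i\equiv hj\pmod{37}\}\). Hence
\[
\widetilde a_{hj}=\sum_{i\equiv hj}a_i=\sum_{i\equiv j}a_{hi}=\sum_{i\equiv j}a_i=\widetilde a_j ,
\]
and likewise for \(\widetilde b\). By hypothesis \(\bar H=U_{37}\), which acts transitively on \(\mathbb Z_{37}\setminus\{0\}\). So \(\widetilde a=(c_0,c_1,\dots,c_1)\) and \(\widetilde b=(e_0,e_1,\dots,e_1)\) for some integers \(c_0,c_1,e_0,e_1\).

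Next, evaluate the DFT at any \(k\not\equiv0\pmod{37}\). Because \(\sum_{j=1}^{36}\zeta_{37}^{jk}=-1\), we get \(\widetilde A(k)=c_0-c_1\) and \(\widetilde B(k)=e_0-e_1\), both integers. By Lemma~\ref{lem:compression}, these equal \(A(9k)\) and \(B(9k)\). The frequency \(9k\) is nonzero modulo \(333\), so Lemma~\ref{lem:lp-equivalences} gives
\[
(c_0-c_1)^2+(e_0-e_1)^2=2\cdot333+2=668 .
\]
This contradicts Lemma~\ref{lem:not-two-squares}.

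No step here is a real obstacle. The only point needing care is the invariance of the compression under \(\bar H\), that is, checking that multiplication by \(h\) respects the fibres of reduction modulo \(37\). This holds because \(37\mid333\) and \(h\) is invertible modulo \(333\).
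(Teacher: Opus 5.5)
Your proof is correct and follows the paper's argument: the \(37\)-compression is constant on the nonzero residues, so its DFT at every nonzero frequency is an integer, and by Lemma~\ref{lem:compression} that integer is the original DFT value at frequency \(9k\); Lemma~\ref{lem:lp-equivalences} would then write \(668\) as a sum of two squares. You also check explicitly that compression commutes with the multiplier action, which the paper leaves implicit, and that check is sound.
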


\begin{proof}
The \(37\)-compression of either sequence is constant on the nonzero
residues and therefore has the form
\((c_0,c,\ldots,c)\). At any nonzero frequency its DFT is the integer
\[
c_0+c\sum_{j=1}^{36}\zeta_{37}^{jk}=c_0-c.
\]
By Lemma~\ref{lem:compression}, the corresponding original frequency is
\(9k\). The PSD identity would again express \(668\) as a sum of two
integer squares, contradicting Lemma~\ref{lem:not-two-squares}.
\end{proof}

This closes subgroup IDs \(25,26,27,29\).

\subsection{A row-sum residue obstruction}

Let \(\mathcal O_0,\ldots,\mathcal O_{r-1}\) be the multiplication orbits
of \(H\) on \(\mathbb Z_{333}\). The row sum of an invariant sequence is
\[
\sum_{q=0}^{r-1}|\mathcal O_q|\epsilon_q,
\qquad \epsilon_q\in\{\pm1\}.
\]
For IDs \(16,17,18,24\), exact subset-sum propagation modulo \(24\) gives
the same reachable set
\[
\{3,5,7,9,11,13,15,17,19,21\}\pmod{24}.
\]
It excludes both \(1\) and \(-1\), contradicting
Lemma~\ref{lem:lp-equivalences}.

\begin{proposition}\label{prop:rowsum}
No \(H\)-invariant Legendre pair of length \(333\) exists for subgroup IDs
\(16,17,18,24\).
\end{proposition}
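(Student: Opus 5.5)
The proof reduces to a parity-and-residue computation on orbit sizes. For an \(H\)-invariant sequence \(a\) with orbits \(\mathcal O_0,\ldots,\mathcal O_{r-1}\), the row sum is \(\sum_q|\mathcal O_q|\epsilon_q\) with \(\epsilon_q\in\{\pm1\}\). Lemma~\ref{lem:lp-equivalences} requires this sum to equal \(\pm1\), so in particular its residue modulo \(24\) must be \(1\) or \(23\). The plan is to show that for each of IDs \(16,17,18,24\) the set of residues modulo \(24\) attainable as signed orbit-size sums misses both \(1\) and \(23\). That rules out the first sequence already, and hence the pair.

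First, I will determine the orbit-size multiset for each subgroup. By CRT, \(\mathbb Z_{333}\cong\mathbb Z_9\times\mathbb Z_{37}\), and an element \((x,y)\) has stabilizer in \(H\) determined by the additive orders of \(x\) and \(y\). The orbit of \((x,y)\) is therefore the \(H\)-orbit under the diagonal action, of size \(|H|/|\mathrm{Stab}_H(x,y)|\). Here the stabilizer is the set of \(h\in H\) whose \(U_9\)-component fixes \(x\) and whose \(U_{37}\)-component fixes \(y\). In practice this means splitting \(\mathbb Z_{333}\) into the six classes given by \(\gcd(x,9)\in\{1,3,9\}\) and \(y=0\) or \(y\neq0\). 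On each class, \(H\) acts through the appropriate quotient \(U_9\to U_3\) or trivial, and similarly on the \(37\)-side. The orbit sizes are then \(|H|/|\ker|\) for the relevant projection kernels. These computations use only the data \(h_9,h_{37}\) and the structure of \(H\) recorded in Table~\ref{tab:subgroups}.

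Second, I will run the subset-sum propagation modulo \(24\). Start with \(S=\{0\}\), and for each orbit size \(n\) replace \(S\) by \((S+n)\cup(S-n)\bmod 24\). Since orbit sizes repeat heavily, this can be done by hand: \(k\) orbits of common size \(n\) contribute \(\{(k-2j)n : 0\le j\le k\}\bmod 24\). The result should be the set \(\{3,5,\ldots,21\}\). Since it omits \(\pm1\), the proposition follows.

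The main obstacle is getting the orbit structure exactly right for these four subgroups, in particular the contribution of the small orbits (the singleton \(\{0\}\) and the orbits inside \(3\mathbb Z_{333}\) and \(111\mathbb Z_{333}\)), because these are what make the residues \(\pm1\) unreachable. The large orbits of size \(|H|\) are divisible by \(12\) or more and only add multiples of \(24\) or \(\pm12\). I would therefore first isolate the large orbits, whose contribution lies in a small coset set such as \(\{0,12\}\) or \(\{0\}\). I would then check the finitely many signed combinations of the small orbits directly. Finally, I would confirm the full reachable set by the mechanical propagation above, which is exactly what the paper's exact subset-sum computation certifies.
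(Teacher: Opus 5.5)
Your proposal is correct and follows the paper's argument: read off the orbit sizes from $h_9$ and $h_{37}$, then run exact subset-sum propagation modulo $24$ to obtain $\{3,5,\ldots,21\}$, which misses $\pm1$. One correction to your heuristic: the small orbits (total size $9$) can reach $\pm1$ on their own, and what actually blocks $\pm1$ is that the orbits of size $12$ or $36$ are odd in number ($27$ for IDs $16,17,18$ and $9+6$ for ID $24$), which forces their contribution to be $\equiv 12 \pmod{24}$ rather than lying in a set such as $\{0,12\}$.
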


\section{Value-set-restricted 9-compression}
\label{sec:value-compression}

The most useful new reduction occurs when \(H\) is trivial modulo \(9\).
Under
\[
\mathbb Z_{333}\cong\mathbb Z_9\times\mathbb Z_{37},
\]
each residue modulo \(9\) is a column of length \(37\). Let the image of
\(H\) in \(U_{37}\) have order \(h\). It has one orbit of size \(1\) and
\(36/h\) nonzero orbits of size \(h\). An invariant column sum therefore
lies in
\[
V_h=\left\{\epsilon_0+
h\sum_{j=1}^{36/h}\epsilon_j:
\epsilon_j\in\{\pm1\}\right\}.
\]
This orbit-size value set is the information lost by a free-odd
compression.

\begin{lemma}\label{lem:nine-compressed-system}
Let \((a,b)\) be an \(H\)-invariant Legendre pair of length \(333\), where
\(H\) is trivial modulo \(9\), and let
\(\widetilde a,\widetilde b\) be the \(9\)-compressions. Then
\[
\widetilde a_j,\widetilde b_j\in V_h,
\qquad
\sum_j\widetilde a_j,\sum_j\widetilde b_j\in\{\pm1\},
\]
\[
\operatorname{PAF}_{\widetilde a}(s)
+\operatorname{PAF}_{\widetilde b}(s)=-74
\qquad(1\leq s\leq8),
\]
and
\[
\sum_{j=0}^{8}\widetilde a_j^2+
\sum_{j=0}^{8}\widetilde b_j^2=594.
\]
\end{lemma}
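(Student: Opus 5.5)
The plan is to verify the four conclusions of Lemma~\ref{lem:nine-compressed-system} in turn, using Lemma~\ref{lem:compression} with \(d=9\) and \(m=37\), Lemma~\ref{lem:lp-equivalences}, and the orbit description of \(H\)-invariance.

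\emph{Value sets.} Identify \(\mathbb Z_{333}\) with \(\mathbb Z_9\times\mathbb Z_{37}\) via CRT. The compressed entry \(\widetilde a_j\) is the sum of \(a\) over the fibre \(\{j+9t\}\), which corresponds to \(\{j\}\times\mathbb Z_{37}\). Since \(H\) is trivial modulo \(9\), each \(h\in H\) acts as \((x,y)\mapsto(x,\bar h y)\), where \(\bar h\) is the image of \(h\) in \(U_{37}\). Thus \(H\) preserves each column and acts on it through its image \(\bar H\leq U_{37}\), of order \(h\). As \(37\) is prime, \(\bar H\) fixes \(0\) and acts freely on \(\mathbb Z_{37}^\times\). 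The column therefore splits into \(\{0\}\) and \(36/h\) orbits of size \(h\). An invariant sequence is constant on these orbits, so \(\widetilde a_j=\epsilon_0+h\sum_i\epsilon_i\in V_h\), and likewise for \(\widetilde b_j\).

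\emph{Row sums.} We have \(\sum_j\widetilde a_j=\sum_i a_i\), which is \(\pm1\) by Lemma~\ref{lem:lp-equivalences}. The same holds for \(b\).

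\emph{Off-peak autocorrelations.} For \(1\leq s\leq 8\), the autocorrelation identity of Lemma~\ref{lem:compression} gives
\[
\operatorname{PAF}_{\widetilde a}(s)+\operatorname{PAF}_{\widetilde b}(s)=\sum_{r\equiv s\ (9)}\bigl(\operatorname{PAF}_a(r)+\operatorname{PAF}_b(r)\bigr).
\]
The sum runs over the \(37\) residues \(r\equiv s\pmod 9\), none of which is \(0\) because \(s\not\equiv0\). Each summand equals \(-2\), so the total is \(-74\).

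\emph{Sum of squares.} For \(s=0\), the residues \(r\equiv0\pmod 9\) comprise \(r=0\), contributing \(333+333\), together with \(36\) nonzero residues each contributing \(-2\). This gives \(666-72=594\). Since \(\operatorname{PAF}_{\widetilde x}(0)=\sum_j\widetilde x_j^2\), the stated identity follows. As a consistency check, the totals \(594-8\cdot74=2=1+1\) agree with the row sums.

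I expect no real obstacle. The only point needing care is the orbit structure: that each \(9\)-compression fibre corresponds exactly to a column, and that \(H\) stabilizes each column and acts on it through \(\bar H\). Both follow from the Chinese remainder theorem, so this step needs only to be stated correctly.
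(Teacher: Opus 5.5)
Your proposal is correct and follows essentially the same route as the paper: the value-set, row-sum and off-peak steps match. The only difference is the squared norm. You obtain \(594\) directly from the compression identity at \(s=0\) (\(666-36\cdot 2\)). The paper instead derives it from \(\sum_{s}\operatorname{PAF}_{\widetilde x}(s)=\bigl(\sum_j \widetilde x_j\bigr)^2\) together with the row sums and the eight off-peak values, which is exactly your consistency check.
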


\begin{proof}
The value-set statement follows from the orbit sizes in each column.
Compression preserves total sums. For \(s\neq0\pmod9\), exactly \(37\)
nonzero shifts modulo \(333\) are congruent to \(s\), so
Lemma~\ref{lem:compression} and the Legendre equations give \(-74\).
Finally,
\[
\sum_{s=0}^{8}\operatorname{PAF}_x(s)=\left(\sum_jx_j\right)^2.
\]
Adding this identity for the two compressed sequences and substituting the
eight nonzero correlations gives
\[
\operatorname{PAF}_{\widetilde a}(0)
+\operatorname{PAF}_{\widetilde b}(0)
=2-8(-74)=594.
\]
\end{proof}

\subsection{The final order-9 subgroup}

The only order-\(9\) subgroup not already closed by the certificates in
Section~\ref{sec:certified} is
\[
H_{12}=\langle10,46\rangle.
\]
It is trivial modulo \(9\), and its mod-\(37\) image has order \(9\).
Thus
\[
V_9=\{\pm1,\pm17,\pm19,\pm35,\pm37\}.
\]

\begin{theorem}\label{thm:id12}
There is no \(H_{12}\)-invariant Legendre pair of length \(333\).
\end{theorem}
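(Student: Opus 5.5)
The plan is to apply Lemma~\ref{lem:nine-compressed-system} with \(h=9\). This places all eighteen compressed entries \(\widetilde a_j,\widetilde b_j\) in \(V_9=\{\pm1,\pm17,\pm19,\pm35,\pm37\}\). We then use the norm identity to pin down the multiset of absolute values, the row-sum condition to locate the large entries, and one autocorrelation equation to reach a contradiction. We argue throughout with integer bounds, without Fourier analysis.

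\emph{Step 1 (the norm identity).} The possible squares are \(1,289,361,1225,1369\). Since \(\sum_j\widetilde a_j^2+\sum_j\widetilde b_j^2=594\) over eighteen entries, the total excess over the baseline \(18\) is \(576\). The available excesses per entry are \(288,360,1224,1368\), and the last two already exceed \(576\). So we must solve \(288x+360y=576\) in nonnegative integers. Taking \(y=1\) leaves \(216\), which is not a multiple of \(288\), so \(y=0\) and \(x=2\). Hence exactly two entries have absolute value \(17\), and the other sixteen are \(\pm1\).

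\emph{Step 2 (the row-sum condition).} Each compressed sequence has nine odd entries with sum \(\pm1\). Suppose a sequence contains exactly one entry \(\pm17\). Its sum is then \(\pm17\) plus a sum of eight \(\pm1\)'s, so its absolute value is at least \(9\). If a sequence contains two entries of equal sign \(\pm17\), its sum has absolute value at least \(34-7\). Both cases are impossible. After possibly swapping \(a\) and \(b\), we conclude that \(\widetilde a\) contains one entry \(+17\) at position \(p\) and one entry \(-17\) at position \(q\neq p\), while every \(\widetilde b_j\in\{\pm1\}\). The remaining seven entries of \(\widetilde a\) then sum to \(\pm1\), which is consistent, so we need one more equation.

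\emph{Step 3 (autocorrelation at the displacement).} Let \(d=q-p\not\equiv0\pmod 9\). Because \(9\) is odd, \(2d\not\equiv0\), so \(p-d\neq q\) and \(q+d\neq p\). In \(\operatorname{PAF}_{\widetilde a}(d)=\sum_j\widetilde a_j\widetilde a_{j+d}\), the term at \(j=p\) equals \(17\cdot(-17)=-289\). The terms at \(j=q\) and \(j=p-d\) each pair one \(\pm17\) with one \(\pm1\), so each has absolute value \(17\). The remaining six terms are \(\pm1\). Therefore
\[
\operatorname{PAF}_{\widetilde a}(d)\le -289+34+6=-249 .
\]
Also \(\operatorname{PAF}_{\widetilde b}(d)\le 9\), since all entries of \(\widetilde b\) are \(\pm1\). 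Together these give \(\operatorname{PAF}_{\widetilde a}(d)+\operatorname{PAF}_{\widetilde b}(d)\le-240\), which contradicts the required value \(-74\) from Lemma~\ref{lem:nine-compressed-system}.

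The main obstacle is Step 1. The argument depends on the orbit-size value set \(V_9\) to exclude odd magnitudes between \(1\) and \(17\); with arbitrary odd entries, many more configurations would survive. The rest is bookkeeping, and the only care needed is verifying that the three special index positions in Step 3 are distinct, which holds because \(9\) is odd.
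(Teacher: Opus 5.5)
Your proposal is correct and follows the paper's proof essentially step for step. The count $288x+360y=576$ forces exactly two entries of absolute value $17$, the row-sum condition puts $+17$ and $-17$ in the same compressed sequence, and the single-shift bound $-289+34+6$ gives a total of at most $-240<-74$; you are only more explicit than the paper in excluding the same-sign case and in checking that the three special terms are distinct, which the paper compresses into the remark $2s\not\equiv 0\pmod 9$.
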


\begin{proof}
By Lemma~\ref{lem:nine-compressed-system}, the \(18\) entries of
\(\widetilde a,\widetilde b\) have squares in
\[
\{1,289,361,1225,1369\}
\]
and total squared norm \(594\). Starting from \(18\) entries of square \(1\),
the excess is \(576\). Entries of square \(1225\) or \(1369\) are already
too large, while the remaining count equation is
\[
288x+360y=576,
\]
or \(4x+5y=8\). Its unique nonnegative solution is
\((x,y)=(2,0)\). Hence exactly two compressed entries have absolute value
\(17\), and all other entries have absolute value \(1\).

The two large entries cannot be split between the sequences: a length-\(9\)
sequence with one entry of absolute value \(17\) and eight entries of
absolute value \(1\) has absolute row sum at least \(9\), contrary to the
required row sum \(\pm1\). Thus, after interchanging the sequences, one
compressed sequence contains \(+17\) at a position \(p\) and \(-17\) at a
distinct position \(q\), together with seven entries in \(\{\pm1\}\); the
other sequence is entirely in \(\{\pm1\}\).

Set \(s=q-p\pmod9\). Because \(s\neq0\) and \(2s\not\equiv0\pmod9\), the nine
terms in \(\operatorname{PAF}_{\widetilde a}(s)\) comprise one big--big
term, two big--small terms, and six small--small terms. Consequently,
\[
\operatorname{PAF}_{\widetilde a}(s)
\leq -17^2+2\cdot17+6=-249.
\]
The second compressed sequence has nine \(\{\pm1\}\)-entries, so its PAF is
at most \(9\). Therefore
\[
\operatorname{PAF}_{\widetilde a}(s)
+\operatorname{PAF}_{\widetilde b}(s)
\leq -240<-74,
\]
contradicting Lemma~\ref{lem:nine-compressed-system}.
\end{proof}

The proof uses only one compressed shift and no search. An independent
enumeration provides redundant confirmation: the \(5{,}292\) admissible
compressed sequences consist of \(252\) sequences with no large entry and
\(5{,}040\) sequences with two large entries, giving
\(2\cdot252\cdot5{,}040=2{,}540{,}160\) ordered pairs to test. CP-SAT and SMT
encodings provide further independent confirmation.

\subsection{Two lower-order subgroups}

The same lemma applies to every subgroup trivial modulo \(9\). For the
order-\(4\) and order-\(6\) images, the value sets are
\[
V_4=\{\pm3,\pm5,\pm11,\pm13,\pm19,\pm21,\pm27,\pm29,\pm35,\pm37\},
\]
\[
V_6=\{\pm1,\pm11,\pm13,\pm23,\pm25,\pm35,\pm37\}.
\]
Exact square-sum-pruned enumeration gives the following.

\begin{table}[t]
\centering
\caption{Exact value-set \(9\)-compression decisions. The candidate count is
the number of row-sum and squared-norm compatible compressed sequences visited
by the exact enumerator.}
\label{tab:value-compression}
\begin{tabular}{rrrrr}
\toprule
ID & \(h\) & \(|V_h|\) & square multisets & candidates \\
\midrule
6  & 4 & 20 & 9 & \(2{,}428{,}992\) \\
8  & 6 & 14 & 1 & \(148{,}428\) \\
12 & 9 & 10 & 1 & \(5{,}292\) \\
\bottomrule
\end{tabular}
\end{table}

\begin{proposition}\label{prop:id6-id8}
No \(H\)-invariant Legendre pair of length \(333\) exists for subgroup IDs
\(6\) or \(8\).
\end{proposition}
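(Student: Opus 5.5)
The plan is to reduce both cases to the finite compressed system of Lemma~\ref{lem:nine-compressed-system}, exactly as was done for \(H_{12}\), and then to decide that system by exact enumeration. First I will check from the generators in Table~\ref{tab:subgroups} that IDs \(6\) and \(8\) are trivial modulo \(9\), with mod-\(37\) images of orders \(h=4\) and \(h=6\). Then every compressed entry lies in \(V_4\), respectively \(V_6\). The row sums must be \(\pm1\), the eight nonzero compressed correlations must sum to \(-74\), and the total squared norm must be \(594\).

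The first step is the square-multiset analysis, mirroring the proof of Theorem~\ref{thm:id12}.

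For \(h=6\), the squares available are \(1,121,169,529,625,\dots\). The eighteen entries must have squares summing to \(594\). Writing the excess over \(18\) as \(120x+168y+528z+\dots=576\), the only solutions appear to be \(x=2,y=2\) (since \(240+336=576\)) and \(z=1\) with excess \(48\), which is not representable. So I expect a single multiset: two entries of absolute value \(11\), two of absolute value \(13\), and fourteen of absolute value \(1\). This matches Table~\ref{tab:value-compression}.

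For \(h=4\), every entry is odd with absolute value at least \(3\). The minimum norm is \(18\cdot9=162\), and the excess decomposition over \(\{16,112,160,352,\dots\}\) yields nine multisets.

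Next I distribute the large entries between the two sequences using the row-sum constraint.

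For \(h=6\), a sequence with large entries of absolute values \(11,11,13,13\) can have row sum \(\pm1\) in several sign patterns, such as \(11-11+13-13\) plus five \(\pm1\) entries. Splits like \(\{11,13\}\) against \(\{11,13\}\) are also possible, for example \(13-11\) with seven small entries summing to \(\mp1\) or \(\mp3\). So the case analysis does not collapse as cleanly as it did for order \(9\).

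I will therefore first try a single-shift bound. Choose \(s\) as the displacement between two opposite-signed large entries. The big--big product then contributes about \(-143\) or \(-169\), while the remaining terms are bounded above. This may fail when large entries of the same sign in the other sequence coincide at displacement \(s\) and cancel the negative contribution, so I do not rely on it.

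The committed route is exact enumeration. For each square multiset, I list all length-\(9\) sequences over \(V_h\) with row sum \(\pm1\) and the right squared norm, normalizing the sign at position \(0\) or via global negation. I split these lists according to their share of the squared norm. Then, meet-in-the-middle, I index the second sequence's candidates by their vector \((\operatorname{PAF}(1),\dots,\operatorname{PAF}(4))\); by symmetry \(\operatorname{PAF}(s)=\operatorname{PAF}(9-s)\), so these four shifts determine all eight. For each candidate \(\widetilde a\), I look up the complementary vector \(-74-\operatorname{PAF}_{\widetilde a}(s)\). An empty hit set for every multiset proves the proposition.

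The main obstacle is the \(h=4\) case. It has nine multisets and roughly \(2.4\) million candidates, so correctness rests on the enumerator being exhaustive. I will make it auditable by recording the candidate counts per multiset and by cross-checking with an independent CP-SAT encoding of the same compressed system.
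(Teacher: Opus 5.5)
Your proposal is correct and matches the paper's proof essentially step for step. You reduce to the value-set \(9\)-compressed system of Lemma~\ref{lem:nine-compressed-system}, enumerate the square multisets (one for \(h=6\) and nine for \(h=4\), as you correctly computed), and list the row-sum-compatible length-\(9\) sequences. You then hash the four independent PAF values and show that no two profiles sum to \((-74,-74,-74,-74)\). The one incidental difference is the CP-SAT cross-check: in the paper it confirms ID \(8\) but does not terminate for ID \(6\), so the ID \(6\) exclusion rests on the exhaustive enumeration alone, as you anticipated.
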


\begin{proof}
For each subgroup, the enumerator first lists all multisets of \(18\) squares
from \(V_h\) summing to \(594\). It then generates every length-\(9\) sequence
with row sum \(\pm1\) compatible with such a multiset and hashes its four
independent PAF values. No two profiles sum to
\((-74,-74,-74,-74)\). The counts in
Table~\ref{tab:value-compression} are reproduced by a dependency-free
verifier. For ID \(8\), an independent CP-SAT model also returns infeasible.
For ID \(6\), CP-SAT did not terminate within its budget; its exclusion rests
on the complete enumeration, which is independently reproduced by the
dependency-free verifier.
\end{proof}

For comparison, IDs \(0\) and \(1\) have the full odd value set
\(\{-37,-35,\ldots,35,37\}\); a stored witness satisfies the compressed
system, so this relaxation is genuinely feasible. ID \(3\) has a smaller
value set but remains undecided by the present enumeration budget.

\section{Exact orbit models and proof-carrying exclusions}
\label{sec:certified}

We now describe the exact finite systems used for the remaining subgroups. Let
\(\mathcal O_0,\ldots,\mathcal O_{r-1}\) be the multiplication orbits of
\(H\) on \(\mathbb Z_{333}\), with \(\mathcal O_0=\{0\}\), and let
\(x_q\in\{\pm1\}\) be the common sequence value on \(\mathcal O_q\).
For a shift \(s\), define
\[
D_s(q,r)=\#\{i\in\mathcal O_q:i+s\in\mathcal O_r\},
\]
\[
c_s=\sum_qD_s(q,q),\qquad
W_s(q,r)=D_s(q,r)+D_s(r,q)\quad(q<r).
\]
Then
\begin{equation}\label{eq:orbit-paf}
\operatorname{PAF}_x(s)=
c_s+\sum_{q<r}W_s(q,r)x_qx_r.
\end{equation}
Only one shift representative from each multiplication orbit is needed. The
row-sum condition is
\begin{equation}\label{eq:orbit-row}
\sum_q|\mathcal O_q|x_q\in\{\pm1\}.
\end{equation}
Equations~\eqref{eq:orbit-paf} and \eqref{eq:orbit-row} are checked both
against direct length-\(333\) arithmetic and by an independent implementation.

\subsection{Pseudo-Boolean formulation}

Write \(x_q=1-2z_q\), where \(z_q\) is Boolean, and introduce
\(w_{qr}=z_q\mathbin{\mathsf{xor}}z_r\). Then
\(x_qx_r=1-2w_{qr}\). Because
\[
c_s+\sum_{q<r}W_s(q,r)=333,
\]
the two-sequence PAF equation becomes the nonnegative weighted equality
\begin{equation}\label{eq:pb}
\sum_{q<r}W_s(q,r)
\left(w^{(a)}_{qr}+w^{(b)}_{qr}\right)=334.
\end{equation}
The row-sum condition becomes
\[
\sum_q|\mathcal O_q|z_q\in\{166,167\}.
\]

\begin{lemma}[Encoding equivalence]\label{lem:encoding-equivalence}
For every subgroup \(H\) encoded in this section, the archived CNF
\(\Phi_H\) is satisfiable if and only if an \(H\)-invariant Legendre pair of
length \(333\) exists. The unit-split CNF used for DRAT checking is
equisatisfiable with \(\Phi_H\).
\end{lemma}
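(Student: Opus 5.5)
The proof has two parts. First, the orbit-level system \eqref{eq:orbit-paf}, \eqref{eq:orbit-row} is equivalent to the existence of an \(H\)-invariant pair. Second, the Boolean translation and the CNF compilation preserve that system exactly.

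For the first part, I use the bijection between \(H\)-invariant sequences \(a\in\{\pm1\}^{333}\) and orbit sign vectors \((x_q)\in\{\pm1\}^r\). Expanding \(\operatorname{PAF}_a(s)=\sum_i a_ia_{i+s}\) and grouping indices \(i\in\mathcal O_q\) with \(i+s\in\mathcal O_r\) gives \eqref{eq:orbit-paf} directly from the definition of \(D_s\). Next, \(\operatorname{PAF}_a(hs)=\operatorname{PAF}_a(s)\) for \(h\in H\), because \(i\mapsto hi\) permutes \(\mathbb Z_{333}\) and \(a\) is invariant. So the Legendre equations for all \(1\leq s<333\) are equivalent to those at one representative of each nonzero shift orbit. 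The row-sum condition in Lemma~\ref{lem:lp-equivalences} is a consequence of the Legendre equations, so including \eqref{eq:orbit-row} adds no solutions and removes none. This establishes equivalence with the orbit system.

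For the Boolean translation, substitute \(x_q=1-2z_q\) and \(x_qx_r=1-2w_{qr}\) with \(w_{qr}=z_q\oplus z_r\). Using \(c_s+\sum_{q<r}W_s(q,r)=\operatorname{PAF}(0)\) contribution count \(=333\), the equation \(\operatorname{PAF}_a(s)+\operatorname{PAF}_b(s)=-2\) rearranges exactly to \eqref{eq:pb}. The row sum \(\sum_q|\mathcal O_q|(1-2z_q)=\pm1\) becomes \(\sum_q|\mathcal O_q|z_q\in\{166,167\}\). Each step is an algebraic identity over \(\{0,1\}\)-assignments, so the pseudo-Boolean system is satisfiable iff the orbit system is.

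The main obstacle is the CNF step. I would argue it by the standard properties of the encodings used: XOR gates by their four Tseitin clauses, and pseudo-Boolean equalities by a sequential counter or totalizer. Each of these is sound and complete: any satisfying assignment of the original variables extends uniquely to the auxiliary variables, and any CNF model projects to a solution. Because this depends on the concrete archived encoder, I would rely on the independent checks stated in the paper to close this step. Those are the cross-validation of \eqref{eq:orbit-paf} against direct length-\(333\) arithmetic and re-evaluation of decoded models.

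Finally, for the unit-split CNF, I would split on a variable (or a cube of variables) \(v\) into \(\Phi_H\wedge v\) and \(\Phi_H\wedge\neg v\). \(\Phi_H\) is satisfiable iff one branch is, so refuting all branches refutes \(\Phi_H\). This gives equisatisfiability in the direction needed for the DRAT certificates.
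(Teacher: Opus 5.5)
Your orbit reduction, the shift-orbit argument, and the substitution \(x_q=1-2z_q\) leading to \eqref{eq:pb} and the target set \(\{166,167\}\) match the paper. The clearest gap is the last step. The unit-split CNF is not a case split on variables. It is obtained from \(\Phi_H\) by replacing each unit clause \((\ell)\) with \((\ell\vee e)\wedge(\ell\vee\neg e)\), where \(e\) is a fresh variable. Your branching argument therefore concerns a different formula, and by your own account it gives only one direction. The needed argument is short. Resolving the two new clauses on \(e\) recovers \((\ell)\), and any assignment with \(\ell\) true satisfies both clauses for either value of \(e\). Hence models of the two formulas correspond by restriction and extension, which gives equisatisfiability in both directions.

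Two further pieces are missing from the main equivalence. First, \(\Phi_H\) is the normalized model: both signs on the orbit \(\{0\}\) are fixed to \(+1\). Your bijection is with all of \(\{\pm1\}^r\). The direction ``pair exists \(\Rightarrow\) \(\Phi_H\) satisfiable'' therefore needs one more observation: negating \(a\) or \(b\) independently preserves every PAF and maps the row-sum set \(\{\pm1\}\) (equivalently \(\{166,167\}\)) to itself.

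Second, for the CNF compilation you guess a sequential-counter or totalizer encoding, whereas \(\Phi_H\) uses carry-save and ripple full adders, and you then defer to empirical checks. Re-evaluating decoded models is vacuous for the unsatisfiable instances at issue. It also tests only soundness, whereas the nonexistence results rest on completeness: every genuine pair must extend to a model of \(\Phi_H\). The paper argues this from the clause structure instead. The four Tseitin clauses define each XOR in both directions, and the full-adder clauses determine every bit of each weighted sum. So every orbit assignment extends uniquely to the auxiliary variables, and the output-bit constraints hold exactly when the sums meet their targets.
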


\begin{proof}
An \(H\)-invariant sequence is determined by one sign on each multiplication
orbit. Independent global negation of either sequence preserves every PAF and
the row-sum domain \(\{\pm1\}\), so both signs on the orbit \(\{0\}\) may be
normalized to \(+1\). With \(z_q=(1-x_q)/2\), the row sums are exactly the two
weighted constraints above. Moreover,
\(\operatorname{PAF}_x(hs)=\operatorname{PAF}_x(s)\) for \(h\in H\), so one
representative from each nonzero shift orbit imposes all Legendre equations.
Equation~\eqref{eq:orbit-paf} converts each of them exactly into
\eqref{eq:pb}. The four Tseitin clauses define each XOR in both directions,
and the carry-save and ripple full-adder clauses define every bit of each
weighted sum. Thus an invariant Legendre pair extends to a satisfying
assignment of \(\Phi_H\), while any satisfying assignment restricts to orbit
signs satisfying every row-sum and PAF equation. Finally, replacing a unit
clause \((\ell)\) by
\((\ell\vee e)\wedge(\ell\vee\neg e)\), with \(e\) fresh, preserves
existential satisfiability.
\end{proof}

The implementation is tested against Lemma~\ref{lem:encoding-equivalence},
not assumed to realize it. A standalone standard-library generator independently
reconstructs the subgroup orbits and PAF matrices, builds a separate
XOR/adder encoding, checks its arithmetic semantics on sampled primary
assignments, and reproduces the SHA-256 hashes of both DIMACS variants. The
primary generator is also exhaustively audited on singleton-orbit lengths
\(5\) and \(7\), where it accepts directly verified positive Legendre-pair
controls.

For IDs \(11,15,19,23,24,28\), at shift \(111\) the left side of
\eqref{eq:pb} has maximum \(222\), producing a direct arithmetic
contradiction. For IDs \(13,14,20,21,22\), the complete system is encoded in
CNF using exact Tseitin XOR clauses and carry-save weighted adders. The saved
DRAT traces are independently checked with \texttt{drat-trim}
\cite{WetzlerHeuleHunt2014}. The proof inputs, traces, checker source, dependency
pins, and positive controls are archived with the paper.

\begin{table}[t]
\centering
\caption{Proof-carrying pseudo-Boolean instances. Variable and clause counts refer to the unit-split CNF checked by \texttt{drat-trim}; proof size is shown only for nontrivial DRAT traces.}
\label{tab:proof-artifacts}
\small
\begin{tabular}{rrrrlr}
\toprule
ID & $r$ & variables & clauses & evidence & MiB \\
\midrule
11 & 65 & 68,112 & 459,592 & direct PB & -- \\
13 & 41 & 38,680 & 262,778 & DRAT & 51.8 \\
14 & 41 & 38,680 & 262,778 & DRAT & 48.5 \\
15 & 50 & 39,411 & 264,856 & direct PB & -- \\
19 & 35 & 21,264 & 142,720 & direct PB & -- \\
20 & 27 & 7,870 & 50,942 & DRAT & 3.0 \\
21 & 23 & 11,094 & 74,390 & DRAT & 1043.8 \\
22 & 23 & 11,094 & 74,390 & DRAT & 14.4 \\
23 & 25 & 10,456 & 69,600 & direct PB & -- \\
24 & 20 & 6,258 & 41,236 & direct PB & -- \\
28 & 15 & 2,786 & 17,828 & direct PB & -- \\
\bottomrule
\end{tabular}
\end{table}

\subsection{Meet-in-the-middle cross-checks}

For orbit counts \(r\leq27\), a separate meet-in-the-middle implementation
enumerates all \(2^{r-1}\) normalized orbit assignments by Gray code. It keeps
assignments with row sum \(\pm1\), stores their PAF profiles, and searches for
complementary profiles summing to \(-2\) in every shift orbit. It was used as a
cross-check for IDs \(20,21,22,23,24,28\). The archived source and orbit-spec
files permit these runs to be repeated, but the MITM output records are not
part of the immutable proof bundle. Accordingly, these computations are not
used as the archival proof vehicle; the checked DRAT traces, direct
pseudo-Boolean bounds, and row-sum certificate provide that evidence.

\section{Classification}\label{sec:classification}

We can now assemble the preceding exclusions.

\begin{theorem}\label{thm:classification}
Inside the mod-\(3\) kernel \(U_1\), exactly \(21\) of the \(30\) fixed
common-multiplier subgroups are proved impossible by the methods of
this paper. All \(19\) subgroups of order at least \(9\) are impossible.
The unresolved IDs are
\[
0,1,2,3,4,5,7,9,10,
\]
with orders \(1,2,3,3,3,3,6,6,6\), respectively.
\end{theorem}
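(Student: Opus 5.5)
The proof is bookkeeping: I will assemble the exclusions already established and check that they cover exactly the claimed IDs. First I will tabulate which subgroup IDs each earlier result closes:
\begin{itemize}
\item Proposition~\ref{prop:mod37} closes \(25,26,27,29\).
\item Proposition~\ref{prop:rowsum} closes \(16,17,18,24\).
\item Theorem~\ref{thm:id12} closes \(12\).
\item Proposition~\ref{prop:id6-id8} closes \(6,8\).
\item The direct pseudo-Boolean shift-\(111\) bound (maximum \(222<334\) in \eqref{eq:pb}) closes \(11,15,19,23,24,28\).
\item The DRAT-certified instances close \(13,14,20,21,22\). Via Lemma~\ref{lem:encoding-equivalence}, unsatisfiability of \(\Phi_H\) is equivalent to nonexistence.
\end{itemize}
The union, with the duplicate \(24\) counted once, is
\[
\{6,8,11,12,13,14,15,16,17,18,19,20,21,22,23,24,25,26,27,28,29\},
\]
which has \(21\) elements. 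Its complement in \(\{0,\ldots,29\}\) is \(\{0,1,2,3,4,5,7,9,10\}\), matching the stated unresolved list.

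Next I will verify the order claims against Table~\ref{tab:subgroups}. The order-\(\ge 9\) subgroups are those of orders \(9,12,18,27,36,54,108\). From the lattice count in Section~\ref{sec:reduction} they number \(4+4+4+1+4+1+1=19\). I will read the orders of the \(21\) excluded IDs off the table and check that the \(19\) subgroups of order \(\ge9\) all lie in the excluded set. The two remaining excluded IDs, \(6\) and \(8\), have orders \(4\) and \(6\), consistent with the value-set images \(h=4,6\). The unresolved IDs then have the orders \(1,2,3,3,3,3,6,6,6\). This agrees with Table~\ref{tab:order-summary}, whose order-\(\le 6\) rows total \(1+1+4+1+4=11\) subgroups with two impossible.

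Finally, Proposition~\ref{prop:mod3} shows that any \(H\) outside \(U_1\) is impossible, so restricting attention to the \(30\) subgroups of \(U_1\) loses nothing. Combined with the count above, this also yields Theorem~\ref{thm:main}.

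The main obstacle is not mathematical depth but correct matching of IDs to orders and generators. This requires Table~\ref{tab:subgroups} and confirmation that each cited certificate refers to the intended subgroup. I would cross-check each ID's order via the recorded \(h_9,h_{37}\) data, noting that \(|H|\) divides \(\operatorname{lcm}\)-compatible products and in particular equals \(h_9h_{37}\) divided by the order of the fibre overlap. I would also check that the four full-image IDs are exactly those with \(h_{37}=36\).
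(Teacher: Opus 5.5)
Your proposal is correct and is essentially the paper's own proof. Like the paper, you collect the closures from Propositions~\ref{prop:mod37}, \ref{prop:rowsum} and \ref{prop:id6-id8}, Theorem~\ref{thm:id12}, the direct shift-\(111\) pseudo-Boolean bound and the checked DRAT certificates, note that ID \(24\) is covered twice so the union has \(21\) elements, and read the complement and the orders off Table~\ref{tab:subgroups}, with your extra cross-checks (the \(4+4+4+1+4+1+1=19\) count and the \(h_{37}=36\) identification of the full-image IDs) adding consistency checks without changing the argument.
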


\begin{proof}
Proposition~\ref{prop:mod37} closes IDs \(25,26,27,29\), and
Proposition~\ref{prop:rowsum} closes IDs \(16,17,18,24\).
Theorem~\ref{thm:id12} and Proposition~\ref{prop:id6-id8} close IDs
\(12\) and \(6,8\). The direct pseudo-Boolean bound closes IDs
\(11,15,19,23,28\) (and independently ID \(24\)); the checked DRAT proofs close
IDs \(13,14,20,21,22\). These are precisely the \(21\) impossible rows in
Table~\ref{tab:subgroups}. The remaining nine rows are not declared feasible;
they are only not decided by the present methods.
\end{proof}

\begin{proof}[Proof of Theorem~\ref{thm:main}]
If \(H\not\leq U_1\), Proposition~\ref{prop:mod3} rules out an
\(H\)-invariant pair. If \(H\leq U_1\) and \(|H|\geq9\),
Theorem~\ref{thm:classification} rules it out. Therefore any fixed common
multiplier subgroup of a Legendre pair of length \(333\) has order at most
\(6\).
\end{proof}

\section{Reproducibility and independent verification}
\label{sec:reproducibility}

The complete source, certificates, and verification instructions are archived
at Zenodo \cite{Ramos2026Artifacts}. The archive records:
\begin{itemize}
\item the full subgroup lattice and orbit data;
\item the row-sum and compression certificates;
\item meet-in-the-middle source and orbit specifications for independent
reruns;
\item CNF inputs and DRAT traces;
\item direct pseudo-Boolean upper-bound certificates;
\item positive Legendre-pair controls of lengths \(5\) and \(7\);
\item an independent rebuild of orbit matrices and deterministic CNF
serialization; and
\item SHA-256 manifests and a bogus-proof rejection test.
\end{itemize}

The stable subgroup IDs and verdicts are recorded in the top-level
classification. Some phase-one JSON records retain the computational method by
which an instance was first closed (CP-SAT or meet-in-the-middle). For
proof-vehicle attribution, the authoritative record is
\texttt{proof\_phase2/manifest.json}: it identifies the final direct
pseudo-Boolean and DRAT certificates used in this paper.

The analytic proof of Theorem~\ref{thm:id12} and the value-set verifier require
only the Python standard library. The proof-carrying SAT archive contains the
portable \texttt{drat-trim} source and reruns every proof check. No
nonexistence statement in Theorem~\ref{thm:classification} is based on a
timeout or a floating-point comparison.

\section{Discussion and limitations}\label{sec:discussion}

The result shows that strong fixed common-multiplier symmetry cannot solve the
length-\(333\) Legendre-pair problem: every such group of order at least \(9\)
is excluded. This is useful computationally because multiplier invariance is
one of the most effective reductions in searches for cyclic combinatorial
objects. Any successful search at length \(333\) must now focus on the nine
weak-symmetry subgroups in Table~\ref{tab:subgroups}, a pair with only
translation-twisted multiplier behavior, or a pair with no useful common
multiplier at all.

The id12 proof also illustrates a general principle. Compression is stronger
when the compressed entries are not treated as arbitrary odd integers but are
restricted by the exact orbit sizes of the multiplier image. In the present
case the free-odd \(9\)-compression has \(842\) possible square multisets and
is feasible, whereas the order-\(9\) mod-\(37\) image permits one square
multiset and yields a one-shift contradiction. Similar value-set restrictions
may be useful at other composite lengths.

Nine low-order subgroups remain open, and Legendre pairs are only one route to
Hadamard order \(668\). Thus the unrestricted existence question is unchanged.

\section{Conclusion}\label{sec:conclusion}

We constrained fixed common-multiplier symmetry for Legendre pairs of length
\(333\) to nine residual low-order subgroups. Of the \(30\) subgroups surviving
the necessary mod-\(3\) reduction, \(21\) are impossible, including all
subgroups of order at least \(9\). The final order-\(9\) subgroup admits a short
analytic contradiction based on an orbit-size-restricted \(9\)-compression.
The computational remainder is backed by exact arithmetic, independent
models, and checkable proof certificates. Consequently, any
multiplier-invariant Legendre-pair route to Hadamard order \(668\) must use a
fixed common multiplier group of order at most \(6\).

\renewcommand{\thetable}{A\arabic{table}}
\setcounter{table}{0}
\section*{Appendix A. Complete subgroup ledger}\label{app:subgroups}
\addcontentsline{toc}{section}{Appendix A. Complete subgroup ledger}

The IDs in Table~\ref{tab:subgroups} are stable across the source, certificates,
and proof archive. Generators are residues modulo \(333\).

\begin{center}
\centering
\captionof{table}{The 30 subgroups of the mod-$3$ kernel, in the stable numbering used by the archived computation. Here $r$ is the number of multiplication orbits on $\mathbb Z_{333}$, and $h_9,h_{37}$ are the orders of the images modulo $9$ and $37$. ``Open'' means not decided by the methods in this paper, not that an invariant Legendre pair is known.}
\label{tab:subgroups}
\small
\setlength{\tabcolsep}{3.6pt}
\begin{tabular}{r l r r r r l}
\toprule
ID & generators & $|H|$ & $r$ & $h_9$ & $h_{37}$ & strongest certificate \\
\midrule
0 & $\{1\}$ & 1 & 333 & 1 & 1 & Open \\
1 & $\langle 73\rangle$ & 2 & 171 & 1 & 2 & Open \\
2 & $\langle 112\rangle$ & 3 & 185 & 3 & 1 & Open \\
3 & $\langle 10\rangle$ & 3 & 117 & 1 & 3 & Open \\
4 & $\langle 121\rangle$ & 3 & 113 & 3 & 3 & Open \\
5 & $\langle 211\rangle$ & 3 & 113 & 3 & 3 & Open \\
6 & $\langle 73,154\rangle$ & 4 & 90 & 1 & 4 & Value-set $9$-compression \\
7 & $\langle 73,112\rangle$ & 6 & 95 & 3 & 2 & Open \\
8 & $\langle 10,64\rangle$ & 6 & 63 & 1 & 6 & Value-set $9$-compression \\
9 & $\langle 73,85\rangle$ & 6 & 59 & 3 & 6 & Open \\
10 & $\langle 73,121\rangle$ & 6 & 59 & 3 & 6 & Open \\
11 & $\langle 10,112\rangle$ & 9 & 65 & 3 & 3 & Direct PB upper bound \\
12 & $\langle 10,46\rangle$ & 9 & 45 & 1 & 9 & Value-set $9$-compression \\
13 & $\langle 7\rangle$ & 9 & 41 & 3 & 9 & Checked DRAT proof \\
14 & $\langle 10,16\rangle$ & 9 & 41 & 3 & 9 & Checked DRAT proof \\
\midrule
15 & $\langle 31\rangle$ & 12 & 50 & 3 & 4 & Direct PB upper bound \\
16 & $\langle 10,64,82\rangle$ & 12 & 36 & 1 & 12 & Row-sum obstruction modulo $24$ \\
17 & $\langle 73,85,88\rangle$ & 12 & 32 & 3 & 12 & Row-sum obstruction modulo $24$ \\
18 & $\langle 73,121,154\rangle$ & 12 & 32 & 3 & 12 & Row-sum obstruction modulo $24$ \\
19 & $\langle 10,64,85\rangle$ & 18 & 35 & 3 & 6 & Direct PB upper bound \\
20 & $\langle 10,28\rangle$ & 18 & 27 & 1 & 18 & MITM and checked DRAT proof \\
21 & $\langle 7,58\rangle$ & 18 & 23 & 3 & 18 & MITM and checked DRAT proof \\
22 & $\langle 4\rangle$ & 18 & 23 & 3 & 18 & MITM and checked DRAT proof \\
23 & $\langle 7,16\rangle$ & 27 & 25 & 3 & 9 & Direct PB upper bound \\
24 & $\langle 10,31\rangle$ & 36 & 20 & 3 & 12 & Row-sum obstruction modulo $24$ \\
25 & $\langle 10,19\rangle$ & 36 & 18 & 1 & 36 & Surjective mod-$37$ compression \\
26 & $\langle 4,13\rangle$ & 36 & 14 & 3 & 36 & Surjective mod-$37$ compression \\
27 & $\langle 7,22\rangle$ & 36 & 14 & 3 & 36 & Surjective mod-$37$ compression \\
28 & $\langle 4,7\rangle$ & 54 & 15 & 3 & 18 & Direct PB upper bound \\
29 & $\langle 4,7,13\rangle$ & 108 & 10 & 3 & 36 & Surjective mod-$37$ compression \\
\bottomrule
\end{tabular}
\end{center}

\section*{Acknowledgments}

The authors thank the developers of SageMath, OR-Tools, CaDiCaL, and
\texttt{drat-trim}. Generative AI tools were used to assist with code
generation, literature discovery, and language editing. The authors
independently checked the mathematical arguments, citations, computations, and
proof certificates and take full responsibility for the manuscript.

\section*{Data availability statement}

Source code, generated certificates, CNF instances, DRAT proofs, independent
verifiers, and exact reproduction instructions are publicly archived at
\url{https://doi.org/10.5281/zenodo.21498698}. The corresponding source
repository is
\url{https://github.com/Arthur742Ramos/hadamard-668-multiplier-obstructions}.

\section*{Funding information}

This research received no external funding.

\section*{Conflict of interest}

The authors declare no conflict of interest.

\section*{Author contributions}

Arthur F. Ramos: conceptualization, methodology, software, formal analysis,
investigation, validation, writing -- original draft, and artifact curation.
David B. Hulak: validation, and writing -- review and editing.
Ruy J. G. B. de Queiroz: supervision, and writing -- review and editing.

\bibliography{references}

\begin{thebibliography}{10}
\providecommand{\url}[1]{\texttt{#1}}
\providecommand{\urlprefix}{URL }
\expandafter\ifx\csname urlstyle\endcsname\relax
  \providecommand{\doi}[1]{doi:\discretionary{}{}{}#1}\else
  \providecommand{\doi}{doi:\discretionary{}{}{}\begingroup
  \urlstyle{rm}\Url}\fi

\bibitem{CatiPasechnik2024}
M.~Cati and D.~V. Pasechnik, \textit{A database of constructions of {H}adamard
  matrices}, arXiv:2411.18897  (2024), \doi{10.48550/arXiv.2411.18897}.

\bibitem{DjokovicKotsireas2015}
D.~{\v Z}. Djokovi{\'c} and I.~S. Kotsireas, \textit{Compression of periodic
  complementary sequences and applications}, Designs, Codes and Cryptography
  \textbf{74} (2015), no.~2, 365--377. ArXiv:1302.0571.

\bibitem{Eliahou2025}
S.~Eliahou, \textit{A 64-modular {H}adamard matrix of order 668}, Australasian
  Journal of Combinatorics \textbf{93} (2025), no.~2, 422--427.

\bibitem{FletcherGysinSeberry2001}
R.~J. Fletcher, M.~Gysin, and J.~Seberry, \textit{Application of the discrete
  {F}ourier transform to the search for generalised {L}egendre pairs and
  {H}adamard matrices}, Australasian Journal of Combinatorics \textbf{23}
  (2001), 75--86.

\bibitem{GeorgiouKoukouvinos2002}
S.~Georgiou and C.~Koukouvinos, \textit{On generalized {L}egendre pairs and
  multipliers of the corresponding supplementary difference sets}, Utilitas
  Mathematica \textbf{61} (2002), 47--63. Zbl 1001.05031.

\bibitem{Hadamard1893}
J.~Hadamard, \textit{R{\'e}solution d'une question relative aux
  d{\'e}terminants}, Bulletin des Sciences Math{\'e}matiques \textbf{17}
  (1893), 240--246.

\bibitem{KharaghaniTayfehRezaie2005}
H.~Kharaghani and B.~Tayfeh-Rezaie, \textit{A {H}adamard matrix of order 428},
  Journal of Combinatorial Designs \textbf{13} (2005), no.~6, 435--440.

\bibitem{KotsireasGomezGomezPerez2025}
I.~S. Kotsireas, A.~I. G{\'o}mez, and D.~G{\'o}mez-P{\'e}rez, \textit{On
  properties of {L}egendre pairs under compression}, \textit{Proceedings of the
  2025 International Symposium on Symbolic and Algebraic Computation}, ACM,
  2025, 79--86, \doi{10.1145/3747199.3747549}.

\bibitem{KotsireasKoutschan2021}
I.~S. Kotsireas and C.~Koutschan, \textit{Legendre pairs of lengths $\ell
  \equiv 0 \pmod 3$}, Journal of Combinatorial Designs \textbf{29} (2021),
  no.~12, 870--887. ArXiv:2101.03116.

\bibitem{KotsireasKoutschanWinterhof2025}
I.~S. Kotsireas, C.~Koutschan, and A.~Winterhof, \textit{Quaternary {L}egendre
  pairs {II}}, Discrete Mathematics \textbf{348} (2025), no.~9, 114501. Article
  114501; arXiv:2408.16318.

\bibitem{KotsireasEtAl2023}
I.~S. Kotsireas, C.~Koutschan, D.~A. Bulutoglu, D.~M. Arquette, J.~S. Turner,
  and K.~J. Ryan, \textit{Legendre pairs of lengths $\ell \equiv 0 \pmod 5$},
  Special Matrices \textbf{11} (2023), no.~1, 20230105. Article 20230105;
  arXiv:2111.02105.

\bibitem{KoukouvinosEtAl1997}
C.~Koukouvinos, J.~Seberry, A.~L. Whiteman, and M.-y. Xia, \textit{Optimal
  designs, supplementary difference sets and multipliers}, Journal of
  Statistical Planning and Inference \textbf{62} (1997), no.~1, 81--90.

\bibitem{Paley1933}
R.~E. A.~C. Paley, \textit{On orthogonal matrices}, Journal of Mathematics and
  Physics \textbf{12} (1933), 311--320.

\bibitem{Ramos2026Artifacts}
A.~Ramos, \textit{Multiplier obstructions for {L}egendre pairs of length 333},
  Zenodo  (2026), \doi{10.5281/zenodo.21498698}. Version 1.0.0.

\bibitem{TurnerEtAl2021}
J.~S. Turner, I.~S. Kotsireas, D.~A. Bulutoglu, and A.~J. Geyer, \textit{A
  {L}egendre pair of length 77 using complementary binary matrices with fixed
  marginals}, Designs, Codes and Cryptography \textbf{89} (2021), no.~6,
  1321--1333. ArXiv:2101.10918.

\bibitem{TurnerEtAl2022}
J.~S. Turner, D.~A. Bulutoglu, D.~Baczkowski, and A.~J. Geyer, \textit{Counting
  the decimation classes of binary vectors with relatively prime length and
  density}, Journal of Algebraic Combinatorics \textbf{55} (2022), no.~1,
  61--87.

\bibitem{WetzlerHeuleHunt2014}
N.~Wetzler, M.~J.~H. Heule, and W.~A. Hunt, \textit{{DRAT}-trim: Efficient
  checking and trimming using expressive clausal proofs}, \textit{Theory and
  Applications of Satisfiability Testing -- SAT 2014}, \textit{Lecture Notes in
  Computer Science}, vol. 8561, Springer, 2014, 422--429,
  \doi{10.1007/978-3-319-09284-3_31}.

\end{thebibliography}

\end{document}